\numberwithin{equation}{section}
 \renewcommand\section{\@startsection {section}{1}{\z@}%
     {-4.5ex \@plus -1ex \@minus -.2ex}%
     {2.3ex \@plus.8ex}%
    {\centering\scshape}}
\newcommand{\Q}{\mathbb{Q}}
\renewcommand{\P}{\mathbb{P}}
\newcommand{\Shhom}{\mc{H}om}
\newcommand{\Shend}{\mc{E}nd}
\newcommand{\mc}{\mathcal}
\newcommand{\be}{\begin{equation}}
\newcommand{\ee}{\end{equation}}
\renewcommand{\phi}{\varphi}
\newcommand{\wt}{\widetilde}
\newtheorem{thm}{Theorem}[section]
\newtheorem{prop}[thm]{Proposition}
\newtheorem{lemma}[thm]{Lemma}
\newtheorem{cor}[thm]{Corollary}
\theoremstyle{definition}
\newtheorem{rem}[thm]{Remark}
\numberwithin{equation}{section}
\newcommand{\calJ}{\mathcal{J}}
\newcommand{\calY}{\mathcal{Y}}
\newcommand{\calC}{\mathcal{C}}
\newcommand{\calD}{\mathcal{D}}
\newcommand{\bP}{\mathbb{P}}
\DeclareMathOperator{\Sym}{Sym}
\DeclareMathOperator{\Prym}{Prym}
\DeclareMathOperator{\Fix}{Fix}
\DeclareMathOperator{\Sing}{Sing}
\DeclareMathOperator{\Spec}{Spec}
\title{The Euler number of hyper-K\"ahler manifolds of OG$10$ type}
\author[K. Hulek]{Klaus Hulek}
\address{Institut f\"ur Algebraische Geometrie, Leibniz Universit\"at Hannover,  30060 Hannover, Germany}
\email{hulek@math.uni-hannover.de}
 \author[R. Laza]{Radu Laza}
\address{Stony Brook University,  Stony Brook, NY 11794, USA}
\email{radu.laza@stonybrook.edu}
\author[G. Sacc\`a]{Giulia Sacc\`a}
\address{Columbia University, New York, NY 10027, USA}
\email{gs3032@columbia.edu}
\thanks{First author is grateful to DFG for partial support under grant Hu 337/7-1, the second author is supported in part by NSF grants DMS-1254812 and DMS-1802128, the third author is supported in part by NSF grant DMS-1801818. The last two authors wish to thank  Leibniz Universit\"at Hannover for the warm hospitality and good working conditions during their visits.
}
\begin{document}

\begin{abstract}
Using the \cite{LSV} construction, we give a simple proof for the fact that the Euler characteristic of a hyper-K\"ahler manifold of OG10 type is $\chi(\operatorname{OG10})=176,904$, a result previously established by Mozgovoy \cite{Mozgovoy}. \end{abstract}

\maketitle

\bibliographystyle{amsalpha}

\section{Introduction}
Algebraic manifolds with trivial canonical bundle, or more generally Ricci-flat compact K\"ahler manifolds, are an important class of manifolds and play a special role in the classification of algebraic varieties. By the famous decomposition theorem of 
Beauville and Bogomolov \cite{B-K0}, Ricci flat compact K\"ahler manifolds are, up to finite cover, products of tori, Calabi-Yau manifolds (CY) and  hyper-K\"ahler manifolds (HK), the latter also known as irreducible holomorphic symplectic manifolds (IHSM).  
So far only very few examples of hyper-K\"ahler manifolds are known: these are two infinite series (with one case in each even dimension) namely manifolds which are deformation equivalent to Hilbert schemes of points on $K3$ surfaces and  so-called
generalized Kummer varieties, together with two sporadic examples in dimension $6$ and $10$ respectively, due to O'Grady (denoted OG6 and OG10 below). 
It is a basic question to understand the topology (e.g. the Betti numbers) of these manifolds.  The two infinite series are closely related to symmetric powers of $K3$ surfaces and abelian surfaces respectively, leading to a full description of their cohomology rings
(e.g. \cite{Go,GS}). 
The topology of the six dimensional O'Grady example was determined by the third author and her collaborators \cite{MRS}. 
The purpose of this note is to give a simple proof for the computation of the topological Euler characteristic for OG10, a result 
 first established in the thesis of S. Mozgovoy \cite{Mozgovoy}.

\begin{thm}\label{mainthm}
The  Euler characteristic for a hyper-K\"ahler manifold of OG10 type is $176,904$.
\end{thm}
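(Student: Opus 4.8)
The plan is to realize a representative of the OG10 deformation class through the \cite{LSV} construction and to compute its Euler characteristic fibrewise. Concretely, I would fix a general cubic fourfold $Y \subset \mathbb{P}^5$ and let $\pi \colon \overline{J} \to B = (\mathbb{P}^5)^{\vee}$ be the associated compactified intermediate Jacobian fibration, whose fibre over a point $t$ with smooth hyperplane section $X_t = Y \cap H_t$ is the intermediate Jacobian $J(X_t)$, a principally polarized abelian fivefold. Since $\overline{J}$ is hyper-K\"ahler of OG10 type, $\chi(\operatorname{OG10}) = \chi(\overline{J})$, so it suffices to compute the latter. I would do this by stratifying $B$ according to the isomorphism type of the fibre (equivalently, the singularity type of $X_t$) and using additivity of the Euler characteristic: over each stratum $S$ the family is topologically locally trivial, so $\chi(\overline{J}) = \sum_{S} \chi(S)\,\chi(\overline{J}_t)$ with $t \in S$.

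The decisive simplification is that almost every stratum contributes nothing. Over the open stratum of smooth sections the fibre is an abelian variety, so $\chi(\overline{J}_t) = 0$. More generally, for $t$ in the discriminant the fibre $\overline{J}_t$ is an equivariant compactification of the generalized intermediate Jacobian $G_t$, a semiabelian variety sitting in $1 \to (\mathbb{G}_m)^{r(t)} \to G_t \to A_t \to 1$ with abelian part $A_t$ of dimension $5 - r(t)$. Such a compactification admits a stratification into locally closed pieces, each fibred over $A_t$ with fibre a (possibly compactified) torus; hence each piece, and therefore $\overline{J}_t$, has Euler characteristic a multiple of $\chi(A_t)$, which vanishes as soon as $\dim A_t \geq 1$. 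Consequently only the strata of maximal degeneration $r(t) = 5$, where $A_t$ is a point and $G_t$ is a rank-$5$ torus, can contribute. This is the higher-dimensional analogue of the familiar fact that the Euler characteristic of an elliptic surface is concentrated on its singular fibres.

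It then remains to classify the hyperplane sections $X_t$ of $Y$ whose intermediate Jacobian is totally degenerate, to determine $\chi(\overline{J}_t)$ for the corresponding (toric-type) boundary fibre, and to compute $\chi(S)$ for each such stratum $S \subset B$. Totality of the degeneration is governed by the singularities of $X_t$: a node contributes $1$ to the torus rank $r(t)$ and a worse isolated singularity contributes correspondingly more, so the relevant loci are those where the singularities of $X_t$ sum to total rank $5$ --- most notably the finite set of hyperplanes tangent to $Y$ at five points (the five-nodal sections), together with any positive-dimensional strata carrying admissible combinations involving cusps and higher singularities. For each such stratum one reads off $\chi(\overline{J}_t)$ from the combinatorial data of the degeneration (the dual complex of the semiabelian reduction) and $\chi(S)$ from the enumerative geometry of multiply tangent hyperplanes of $Y$; summing the products should yield $176{,}904$.

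I expect the main obstacle to be precisely this last step. First, one must pin down, for a general $Y$, exactly which singularity configurations actually occur on its hyperplane sections and the Euler characteristics of the loci they sweep out in $B$ --- an intersection-theoretic computation on the space of cubic threefolds and on the dual variety $Y^{\vee}$. Second, over the deepest strata the LSV fibre need not be a smooth toric compactification: it may be reducible or non-normal, so the explicit description of the LSV degenerations is needed to compute $\chi(\overline{J}_t)$ correctly there. Controlling these boundary contributions, and checking that the additivity formula applies (local triviality over each stratum), is where the real work lies; the reduction to totally degenerate fibres is what makes the final bookkeeping manageable.
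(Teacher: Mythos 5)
Your overall strategy is exactly the paper's: realize OG10 as the Lagrangian fibration $\overline{\mathcal J}\to B=(\mathbb P^5)^\vee$ of \cite{LSV}, use additivity/multiplicativity of $\chi$ to discard all fibers whose (semi)abelian part is positive-dimensional, and reduce to counting totally degenerate fibers. However, the three steps you defer are precisely where the content of the proof lies, and one of them you leave in a form that would not close. You write that the contributing loci are the five-nodal hyperplanes \emph{``together with any positive-dimensional strata carrying admissible combinations involving cusps and higher singularities.''} The paper shows there are no such strata (Lemma \ref{only5A1}): since $X$ is general, the linear system of hyperplane sections is a simultaneous versal deformation of the singularities of any section, which bounds the total Milnor number by $\dim B=5$ and restricts the singularities to $A_1,\dots,A_5,D_4,D_5$; passing to the associated plane quintic $C$ (whose singularities match those of $Y$ by the very-good-line result, Prop.~\ref{verygeneralline}), the genus drop $\delta$ of each such singularity satisfies $\delta\le\mu$ with equality only for a node, so $p_g(C)\ge p_a(C)-5=1$ with equality iff the configuration is exactly $5A_1$. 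Without this lemma your sum over strata is not known to be finite, let alone equal to the number of $5$-tangent hyperplanes; this is a genuine gap, not a bookkeeping detail.

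The other two deferred steps are also nontrivial. For the vanishing over fibers with positive-dimensional abelian part, the paper does not stratify the compactified fiber into torus bundles over $A_t$ (which would require knowing the boundary structure you yourself flag as problematic); instead it works with the compactified Prym $\bar P_{D/C}$ and exhibits, for every $n\ge 2$, a finite subgroup $K_n\subset P_{D/C}$ of order $n$ mapping injectively to the abelian quotient $P_{\widetilde D/\widetilde C}$ and acting freely on $\bar P_{D/C}$ (since all stabilizers lie in the affine kernel), forcing $\chi=0$ (Prop.~\ref{chizero}). For the totally degenerate fiber, the value $\chi(\bar P_{D/C})=1$ is proved by stratifying $\bar J_D$ by partial normalizations at subsets of nodes, showing a stratum meets the Prym iff the subset is $\iota$-invariant, in which case the intersection is a torus $(\mathbb C^*)^{k}$ of Euler number $0$ except for the deepest stratum, a single point (Cor.~\ref{chione}). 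Finally, the count of $5$-tangent hyperplanes to a general cubic fourfold is not an exercise one can wave at: the paper quotes Kazarian's Thom polynomial $m_{5A_1}(d)$, valid because of the same versality statement, and evaluates it at $d=3$ to get $176{,}904$. So your reduction is correct in outline, but each of the three inputs you postpone requires a specific argument that the proposal does not supply.
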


An obvious natural question is to determine  the Betti (and Hodge) numbers of   hyper-K\"ahler manifolds $Z$ of OG10 type. This will be discussed elsewhere\footnote{(Note added in proof) This was now settled in \cite{dCRS}. Subsequently, further information on the cohomology of    hyper-K\"ahler manifolds of OG10 type was obtained in \cite{GKLR}.}. 
For now, we only note the following known (mostly general) restrictions on the Betti numbers. Clearly, by definition, $b_1(Z)=0$ for all hyper-K\"ahler manifolds. Also, the second Betti number has been computed  for all known examples of HK. In particular, the case  
of OG10 was done by Rapagnetta  \cite{Rapagnetta}, who showed that $b_2(Z)=24$. 
By Verbitsky \cite{Verbitsky} (see also \cite{LL}), it is also known that for any HK $Z$ of dimension $2n$, and any  $k\in\{2,\dots,n\}$,  the cup product defines a natural inclusion $\Sym^k H^2(Z,\mathbb Z)\hookrightarrow H^{2k}(Z,\mathbb Z)$, and thus $b_{2k}\ge {b_2+k-1 \choose k}$. 
Salamon \cite{Salamon} 
proved that
$$2\sum_{l=1}^{2n} (-1)^l(3l^2-n)b_{2n-l}=nb_{2n}.$$
Together with the knowledge of the Euler number, these relations give some strong restrictions on the Betti numbers, but not sufficient to determine them for  hyper-K\"ahler manifolds of OG10 type.

Our argument for the computation of $\chi(Z)$ is an adaptation of Beauville's method for counting curves on a K3 surface \cite{Beau} which extends the standard computation of the Euler number for $K3$ surfaces by counting the number of singular fibers in an elliptic $K3$ surface. 
Similarly, our starting point is the construction of \cite{LSV} for hyper-K\"ahler manifolds $Z$ of OG10 type as Lagrangian fibrations $Z=\overline \calJ \to B$ associated to a cubic fourfold $X \subset \mathbb P^5$. Namely, the general fiber $ \calJ_b$ is the intermediate Jacobian of the cubic threefold $Y_b=X \cap  H_b$ where $H_b$ is the hyperplane corresponding to a point $b \in B:=({{\mathbb P}^5)}^{\vee}$. This leads, as originally observed in \cite{Donagi-Markman}, to an open Lagrangian fibration $\calJ/U$ over the smooth locus $U =B\setminus X^\vee$ (with $\calJ=\overline \calJ_{\mid U}$). 
On the other hand, by a result of Mumford \cite{MumPrym} the intermediate Jacobian  $\calJ_b$ is isomorphic to a Prym variety. Using this description, in \cite{LSV}, the compactification $Z=\overline \calJ$ was then constructed, \'etale locally, as a relative compactified Prym over $B$. Returning to the proof of Theorem \ref{mainthm}, 
by standard arguments, recalled in  Section \ref{secprelim},   it will be enough to consider only the fibers with   $\chi(\overline \calJ_b)\neq 0$ (Cor. \ref{cor_euler}), which are only finitely many. Now, by construction, the discriminant in $B$ of $\overline \calJ/B$ is the dual variety $X^\vee$, which is naturally stratified in terms of singularities of the tangent hyperplanes to $X$ (i.e. the singularities of the cubic threefold $X\cap H$). For a point in the discriminant, the associated limit compactified intermediate Jacobian has non-zero Euler characteristic only if it has no abelian factor, which in turn is equivalent, under the assumption that $X$ is generic, to saying that $H$ is a $5$-tangent hyperplane to $X$. As in the case of very general elliptically fibered $K3$s, the contribution (of the $5$-tangent hyperplane sections) to the Euler characteristic is $1$ (Cor. \ref{chione}). Thus the computation of the Euler characteristic reduces to the enumerative question of counting the number of $5$-tangent hyperplanes to a general cubic fourfold. These type of questions (and much more) are answered by the theory of Thom polynomials of singularities (e.g. Rim\'{a}nyi \cite{Rim}, Kazarian \cite{Kazarian}).

At this point we would like to compare our approach to that of  Mozgovoy  \cite{Mozgovoy}. His starting point goes back to O'Grady \cite{OG10} (see also \cite{Rapagnetta}) who first constructed hyper-K\"ahler manifolds of OG10 type as symplectic resolutions $\widetilde M$
of certain moduli spaces $M$ of sheaves on $K3$ surfaces $S$. Namely, applying O'Grady's construction to the case of 
polarized $K3$ surfaces $(S,H)$  of degree $2$, one notes that the linear system $|2H|$ has dimension $5$ and then by associating to each sheaf its Fitting support, one obtains a 
fibration $\widetilde M \to |2H|$ whose fibers are themselves moduli spaces of sheaves on (possibly singular) curves $C \in |2H|$. Similar to the argument that we use here, Mozgovoy then uses this fibration and the additivity of the Euler number to compute $\chi(\widetilde M)$. 
However, this is technically somewhat involved 
as the curve $C$ can be singular, reducible and even non-reduced. Consequently, one needs to keep track of  the Euler numbers for various special fibers. In contrast, in our situation there is only one type of relevant special fiber, which comes with multiplicity $1$ and Euler characteristic $1$. Furthermore, one can view Mozgovoy's computation as a degeneration of our computation.  This is due to the fact that cubic fourfolds degenerate to the secant variety of the Veronese surface in $\mathbb P^5$; the limit mixed Hodge structure associated to such a $1$-parameter degeneration is pure, and, in fact, can be naturally identified with the Hodge structure of a degree $2$ $K3$ surface $S$ (see \cite{La10}). Keeping track of the associated \cite{LSV} fibration as the cubic fourfold $X$ degenerates to the secant variety of the Veronese surface, one recovers the original O'Grady construction associated to $S$ as described above (see \cite[\S5.3]{KLSV} for details). Finally, in the limit, the $5$-tangent hyperplanes that we consider lead to the special curves that enter into Mozgovoy's computation. In other words, the locus $\mathfrak L$ of Lagrangian fibered OG10 manifolds constructed by \cite{LSV} is $20$-dimensional, giving a Noether--Lefschetz divisor in the moduli space $\mathfrak M$ of polarized OG10 hyper-K\"ahler manifolds, while the locus $\mathfrak L'$ obtained by O'Grady's method starting with a degree $2$ $K3$ surface $S$ is $19$-dimensional. The argument sketched above says that $\mathfrak L'\subset \mathfrak L$ (and, in fact, a divisor), showing that indeed one can regard Mozgovoy's computation as a limit of ours.

\section{The Prym construction of OG$10$ and enumerative geometry}\label{secprelim}
\subsection{Preliminaries}
We recall that the Euler characteristic for algebraic varieties satisfies  $\chi(Z)=\chi_c(Z)(:= \sum (-1)^i \dim_\Q H^i_c(Z, \Q))$ (e.g. \cite[p. 141]{fulton}). Consequently, the Euler characteristic is additive with respect to  open and closed embeddings, i.e. for $W\subset Z$ closed, and $U=Z\setminus W$, we have $\chi(Z)=\chi(W)+\chi(U)$. Furthermore,  the Euler characteristic is multiplicative for smooth proper  fibrations of algebraic varieties. In particular, in our set-up: fibrations in complex tori - the Euler characteristic for the smooth part is $0$ (e.g. $\chi(\calJ)=0$, where $\calJ/U$ is as in the introduction). Thus, it remains to consider the behavior of the fibration $\overline \calJ/B$ over the singular part. In fact, by considering a Whitney stratification, one can show that only the fibers with non-zero Euler characteristic are relevant for the computation. Moreover, it turns out that there is only a finite number of them. 
Specifically, the following holds.

\begin{prop} \label{eulerprop} 
Let $Z^\circ \to B^\circ$ be a proper morphism of complex algebraic varieties
such that  $\chi(Z_b)=0$  for all $b \in B^\circ$, then $\chi (Z)=0$.
\end{prop}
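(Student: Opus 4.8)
The plan is to stratify the base $B^\circ$ so that $\pi\colon Z^\circ \to B^\circ$ becomes a locally trivial topological fibration over each stratum, and then to combine the additivity and multiplicativity of the Euler characteristic recalled above.

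First I would use the properness of $\pi$: for a proper morphism of complex algebraic varieties, Thom's first isotopy lemma (equivalently, the generic local triviality of algebraic/analytic morphisms in the sense of Verdier) produces a finite stratification of $B^\circ$ into smooth locally closed subvarieties $\{B_\alpha\}$ such that each restriction $\pi^{-1}(B_\alpha) \to B_\alpha$ is a locally trivial fibration in the topological category, with fiber homeomorphic to $Z_b$ for any $b \in B_\alpha$.

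Next, over each stratum I would apply the multiplicativity of the Euler characteristic for locally trivial fibrations, giving
\[
\chi\bigl(\pi^{-1}(B_\alpha)\bigr) = \chi(B_\alpha)\cdot \chi(Z_b), \qquad b \in B_\alpha.
\]
By hypothesis $\chi(Z_b)=0$ for every $b$, so each term $\chi(\pi^{-1}(B_\alpha))$ vanishes. Finally, since the strata partition $B^\circ$ into locally closed pieces, their preimages partition $Z^\circ$ into locally closed pieces, and the additivity of the (compactly supported) Euler characteristic yields
\[
\chi(Z^\circ) = \sum_\alpha \chi\bigl(\pi^{-1}(B_\alpha)\bigr) = 0.
\]

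The main obstacle will be the first step, namely producing a stratification over which $\pi$ is locally trivial; this is exactly where properness is indispensable and where the Thom--Mather theory of stratified maps (Whitney stratifications together with Thom's isotopy lemmas) carries the weight of the argument. Once such a stratification is available, the remaining steps are purely formal consequences of the additivity and multiplicativity of $\chi$ already recorded in the preliminaries. One should also note that the multiplicativity over each stratum uses $\chi=\chi_c$ for complex algebraic varieties, so that the formula $\chi_c(\pi^{-1}(B_\alpha))=\chi_c(B_\alpha)\cdot\chi_c(Z_b)$ applies even when the strata $B_\alpha$ are non-compact.
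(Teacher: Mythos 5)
Your proof is correct and follows essentially the same route as the paper, which simply cites \cite[Prop.~2.4]{CMS} --- a multiplicative formula for $\chi$ of a proper map in terms of a Whitney stratification. You have just unpacked the content of that citation: stratify the base so that properness and the Thom--Mather isotopy lemma give local triviality over each stratum, then combine multiplicativity of $\chi_c$ over strata with additivity over the locally closed decomposition.
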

\begin{proof}
This is a particular case of \cite[Prop. 2.4]{CMS} which gives a general ``multiplicative'' formula for the Euler characteristic for a proper map of algebraic varieties (in terms of a Whitney stratification). 
\end{proof}

From this, we conclude:
\begin{cor}\label{cor_euler}
Let $Z\to B$ be a proper surjective morphism, and $\Sigma_0$ be a finite set  such that $\chi(Z_b)=0$ for $b\in B\setminus \Sigma_0$. Then $\chi(Z)=\sum_{b\in \Sigma_0} \chi(Z_b)$.  \qed
\end{cor}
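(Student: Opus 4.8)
The plan is to deduce the corollary from Proposition \ref{eulerprop} by splitting the base $B$ into the finite locus $\Sigma_0$ and its open complement, and then invoking the additivity of the Euler characteristic under open/closed decomposition recalled in Section \ref{secprelim}.

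First I would set $B^\circ := B \setminus \Sigma_0$. Since $\Sigma_0$ is finite it is closed in $B$, so $B^\circ$ is open; let $Z^\circ \subset Z$ be its preimage and $W \subset Z$ the preimage of $\Sigma_0$, a closed subset with $Z = Z^\circ \sqcup W$ as sets. Additivity of $\chi$ with respect to this open/closed decomposition then gives
$$\chi(Z) = \chi(Z^\circ) + \chi(W).$$

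It remains to compute the two terms. The morphism $Z^\circ \to B^\circ$ is the base change of $Z \to B$ along the open immersion $B^\circ \hookrightarrow B$, and properness is preserved under base change, so $Z^\circ \to B^\circ$ is again proper. Every fiber over a point of $B^\circ$ has vanishing Euler characteristic by hypothesis, so Proposition \ref{eulerprop}, applied to this restricted family, yields $\chi(Z^\circ) = 0$. For the closed part, finiteness of $\Sigma_0$ means that $W = \bigsqcup_{b \in \Sigma_0} Z_b$ is a disjoint union of finitely many closed fibers, so additivity gives $\chi(W) = \sum_{b \in \Sigma_0} \chi(Z_b)$. Substituting both into the displayed identity produces $\chi(Z) = \sum_{b \in \Sigma_0} \chi(Z_b)$, as claimed.

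The argument is essentially formal once Proposition \ref{eulerprop} is in hand, so I do not expect any serious obstacle; the only points deserving a moment's care are checking that the restriction $Z^\circ \to B^\circ$ stays proper, which is exactly stability of properness under base change, and that $W$ genuinely splits as the disjoint union of the individual fibers, which uses that $\Sigma_0$ consists of finitely many points. The surjectivity hypothesis on $Z \to B$ is not needed for the identity itself; it merely guarantees that the fibers under consideration are nonempty.
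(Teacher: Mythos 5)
Your argument is exactly the one the paper intends: the corollary is stated with an immediate \qed because it follows from Proposition \ref{eulerprop} applied to the restriction over $B\setminus\Sigma_0$ together with the additivity of $\chi$ under open/closed decompositions recalled at the start of Section \ref{secprelim}. Your write-up is correct and fills in precisely those steps, including the (minor but worth noting) observations that properness is preserved under restriction and that the surjectivity hypothesis is not actually needed.
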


As already mentioned, we will apply this result to the Lagrangian fibration $Z=\overline \calJ/B$ constructed in \cite{LSV} as a model for OG10 HK manifolds (we note that the locus of Lagrangian fibered OG$10$ is a codimension $1$ locus in moduli). Below, we review this construction and discuss the relevant stratification of the discriminant. 

\subsection{The \cite{LSV} construction of OG10}Let $X \subset \P^5$ be a general cubic fourfold, and let $B:=(\P^5)^\vee$ 
be the projective space parameterizing its hyperplane sections. We denote by $U \subset ({\P^5})^\vee$ the open locus parameterizing smooth hyperplane sections. A hyperplane section $Y_b=X\cap H_b$ for $b\in U$ is a smooth cubic threefold, whose associated intermediate Jacobian $J(Y_b)$  is a principally polarized abelian variety of dimension $5$ (cf. \cite{Clemens-Griffiths}). Considering the family of such intermediate Jacobians leads to a morphism of quasi-projective varieties $\pi_U:\calJ\to U$, and furthermore $\calJ$ carries a holomorphic symplectic form, with respect to which $\calJ/U$ is a Lagrangian fibration (see \cite[\S 8.5.2]{Donagi-Markman}). The content of \cite{LSV} is the construction of a smooth compactification $\overline \calJ/B$ of $\calJ/U$ such that the holomorphic form extends and remains non-degenerate.

\begin{thm}[{Laza--Sacc\`a--Voisin \cite{LSV}}]
Let $X$ be a general cubic fourfold. There exists a smooth projective compactification $Z=\overline \calJ$ of $\calJ$,  which is a hyper--K\"ahler manifold and such that $\pi_U$ extends to a Lagrangian fibration $\pi:\overline \calJ \to B$. 
Moreover, $Z=\overline \calJ$ is of OG10 type. 
\end{thm}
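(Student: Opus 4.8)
The plan is to reconstruct the compactification by realizing $\overline{\calJ}$ \'etale-locally over $B$ as a relative compactified Prym variety, building on Mumford's description of the intermediate Jacobian of a cubic threefold. For $b\in U$, choose a line $\ell\subset Y_b$; projection from $\ell$ exhibits (the blow-up of) $Y_b$ as a conic bundle over $\P^2$ whose discriminant is a plane quintic $\Delta_b\subset\P^2$, carrying a connected \'etale double cover $\widetilde\Delta_b\to\Delta_b$, and Mumford's theorem yields a principally polarized isomorphism $J(Y_b)\cong\Prym(\widetilde\Delta_b/\Delta_b)$. Since no line can be selected globally over $B$, I would first pass to an \'etale cover of $B$ (for instance via the relative Fano surface of lines on the $Y_b$) on which such a family of quintics together with its double cover is available, and which continues to make sense as $b$ degenerates toward the discriminant $X^\vee$.

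Over each such chart the compactification is obtained by embedding the family of Pryms into the relative compactified Jacobian of the family of quintics $\Delta_b$ and taking the closure: fiberwise, one compactifies $\Prym(\widetilde\Delta_b/\Delta_b)$ as the anti-invariant part (under the covering involution) of the compactified Jacobian of $\widetilde\Delta_b$. As $b$ moves from $U$ into the generic part of $X^\vee$, the threefold $Y_b$ acquires a single node, the quintic and its cover degenerate mildly, and the Prym degenerates to a rank-one semiabelian variety whose closure is governed by the standard theory of compactified Jacobians. These local compactified Pryms then glue to a proper family $\overline{\calJ}\to B$ extending $\pi_U$; the gluing is forced by the fact that the open Prym is intrinsically $J(Y_b)$, independent of $\ell$, so that the translation ambiguity in the choice of line is absorbed.

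The technical heart, and the step I expect to be the main obstacle, is to prove that the total space $\overline{\calJ}$ is smooth and that the chosen compactification is canonical (independent of the auxiliary line). This demands a delicate local analysis of the compactified Prym along every stratum of $X^\vee$, controlling simultaneously the singularities of $\Delta_b$, of $\widetilde\Delta_b$, and of the degenerating conic bundle. Here the genericity of $X$ is indispensable: it ensures that only the mild, classified degenerations of $Y_b$ occur (nodal threefolds over the generic part of $X^\vee$, with deeper strata controlled by the singularity stratification recalled in the introduction), so that explicit local models can be shown to be smooth. Once smoothness holds, the Donagi--Markman holomorphic symplectic form on $\calJ$ extends: one verifies that it is holomorphic with no poles across the boundary divisor and that its fifth exterior power is nowhere vanishing, whence $(\overline{\calJ},\sigma)$ is holomorphic symplectic and, since the fibers $\overline{\calJ}_b$ are isotropic of half dimension, $\pi$ is Lagrangian.

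It remains to identify the deformation type. First I would check that $\overline{\calJ}$ is irreducible holomorphic symplectic, i.e. simply connected with $H^0(\overline{\calJ},\Omega^2)$ generated by the nowhere-degenerate $\sigma$; simple connectivity follows from the structure of the Lagrangian fibration together with the existence of a zero section. The second Betti number can then be computed directly from the fibration to be $b_2(\overline{\calJ})=24$, which already separates $\overline{\calJ}$ from the $K3^{[5]}$ and generalized Kummer deformation classes. To conclude that $\overline{\calJ}$ is genuinely of OG10 type rather than merely sharing numerical invariants, I would degenerate the cubic fourfold $X$ to the secant variety of the Veronese surface in $\P^5$ and follow the \cite{LSV} fibration through this one-parameter degeneration: as sketched in the introduction, the limit recovers O'Grady's original symplectic resolution of a moduli space of sheaves on the associated degree-$2$ $K3$ surface, exhibiting $\overline{\calJ}$ as a deformation of a manifold of OG10 type.
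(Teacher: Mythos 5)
The paper does not actually prove this statement: it is quoted from \cite{LSV} and used as a black box, so there is no internal proof to compare your attempt against. Measured against the argument of \cite{LSV} itself, your outline follows the same route --- \'etale-local realization of $\overline{\calJ}$ as a relative compactified Prym via projection from lines, smoothness via a local analysis over the strata of $X^\vee$, extension of the Donagi--Markman symplectic form, and identification of the deformation type through the degeneration of $X$ to the secant variety of the Veronese surface, recovering O'Grady's moduli space for a degree $2$ K3. So the strategy is the right one.

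But as written this is a roadmap, not a proof: every genuinely hard step is named and then deferred. Concretely, you would still need (i) the existence of a \emph{very good line} on \emph{every} hyperplane section $Y=X\cap H$ of a general $X$ (Proposition \ref{verygeneralline}); without it the \'etale, irreducible double cover $D\to C$ and the matching of singularities of $Y$ and $C$ --- on which your local model rests --- are unavailable over the deeper strata of $X^\vee$, and this existence is a nontrivial theorem of \cite{LSV} and \cite{CML}, not a choice one simply ``makes''. (ii) The simultaneous versality of the family of quintics (Remark \ref{rem_versal}) is what makes the relative compactified Jacobian, and hence the relative compactified Prym $\Fix^\circ(-\iota^*)$, smooth; your sentence ``explicit local models can be shown to be smooth'' is precisely the technical core of \cite{LSV} and is not a routine verification. (iii) Descent of the \'etale-local models to $B$ is not ``forced'' by intrinsicness: over the discriminant there is no intrinsic intermediate Jacobian pinning down the compactification, and the independence of the choice of very good line must be argued. (iv) Nondegeneracy of the extended $2$-form is obtained in \cite{LSV} by a Hartogs-type argument off a codimension-$2$ locus, not by direct inspection of $\sigma^{\wedge 5}$ everywhere; and the deformation-type identification requires carrying the degeneration to the chordal cubic through at the level of the total space of the fibration, which is the content of \cite[\S 5.3]{KLSV}. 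None of these is a wrong turn --- they are exactly the steps on which \cite{LSV} spends most of its length --- but the proposal as it stands establishes only the statement of the strategy, which is all this paper needs and all it records.
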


As discussed above, in order to prove Theorem \ref{mainthm}, we need to understand the singular fibers of $\overline \calJ/B$ and their Euler characteristic. This is closely related to the  study of degenerations of intermediate Jacobians (see esp. \cite{CML,CGHL}).  The main tool for studying degenerations of intermediate Jacobians is Mumford's description of the intermediate Jacobian as a Prym variety. Specifically, if $Y$ is a smooth cubic threefold, and $\ell\subset Y$ is a general line, then the projection from $\ell$ realizes $\widetilde Y:=\mathrm{Bl}_{\ell} Y$ as a conic bundle over $\bP^2$. The discriminant of $\widetilde Y\to \bP^2$ is a quintic curve $C$, and 
furthermore $\widetilde Y\to \bP^2$ naturally determines an \'etale double cover $D\to C$. Mumford's theorem then says that $J(Y)\cong \Prym(D,C)$. Based on earlier results of Beauville, Casalaina-Martin and Laza \cite{CML} have noticed that the Prym construction also works well in the singular case (for $Y$ mildly singular), as long as one makes a careful choice of the line $\ell$. Furthermore, if $Y$ is any hyperplane section of a (Hodge) general cubic fourfold $X$, then $\ell$ can be chosen to be a ``very good line'' (see \cite[Def. 2.9]{LSV}). In short, the relevant statement for us is the following:

\begin{prop}[{\cite[Prop. 2.3]{LSV}, \cite{CML}}]  \label{verygeneralline}
Let $X$ be a general cubic fourfold. Then for any hyperplane section $Y=X\cap H$ there exists a line $\ell \subset Y$ such that 
\begin{enumerate}
\item The double cover $f: D \to C$ associated to the conic bundle $\widetilde Y\to \bP^2$ is  \'etale  and both curves $D$ and $C$ are irreducible;
\item  The singularities of $Y$ and those of $C$ are in one-to-one correspondence, including the analytic type (i.e. there is a bijection  $\phi:\Sing(Y)\to \Sing(C)$, and the germ $(Y,y)$ is a double suspension of  $(C,\phi(y))$).  
\end{enumerate}
\end{prop}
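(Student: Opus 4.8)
The plan is to combine the explicit symmetric-matrix description of the conic bundle arising from projection off a line (as in Mumford and Beauville) with a local normal-form computation that directly exhibits the double-suspension relationship, and then to secure the existence of a suitable line by a dimension count over the singularity strata of the hyperplane sections of a general $X$.

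First I would fix coordinates on $\bP^4 = \bP(H)$ so that $\ell = \{x_2 = x_3 = x_4 = 0\}$ and take the projection $\pi_\ell$ from $\ell$ to the $\bP^2$ with coordinates $[x_2:x_3:x_4]$. Since $\ell \subset Y = \{F=0\}$, the cubic $F$ has no monomials purely in $x_0, x_1$, so it can be written as $F = A x_0^2 + B x_0 x_1 + C' x_1^2 + D x_0 + E x_1 + G$ with $A,B,C'$ linear, $D,E$ quadratic, and $G$ cubic in $x_2,x_3,x_4$. The residual conic over a point of $\bP^2$ is then encoded by the symmetric $3\times 3$ matrix $M$ of forms with these entries, and a direct degree count gives that $C = \{\det M = 0\}$ is a plane quintic. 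The double cover $f\colon D \to C$ is the standard one parametrizing a ruling of the degenerate residual conic; it is \'etale precisely when $M$ has rank $\geq 2$ at every point of $\bP^2$, since the rank-$1$ locus (double lines) is exactly the branch locus. Thus statement (1) splits into the absence of rank-$1$ fibres (\'etaleness) together with irreducibility of $C$ and of the nontrivial connected double cover $D$.

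For the local statement (2) I would work analytically near a point $y_0 \in \Sing(Y)$, having arranged $\ell \cap \Sing(Y) = \emptyset$ so that $\wt Y = \mathrm{Bl}_\ell Y \cong Y$ near $y_0$. Choosing the line so that the residual conic over $p_0 = \pi_\ell(y_0)$ has rank $2$, I would complete the square in the two fibre variables to bring the total space into the normal form $v_0 v_1 + \tilde c(s,t) = 0$, where $(s,t)$ are local coordinates on the base $\bP^2$; the same computation of $\det M$ shows $C = \{\tilde c(s,t) = 0\}$ locally. Since $v_0 v_1$ is equivalent to $w_0^2 + w_1^2$, this is exactly the double suspension $w_0^2 + w_1^2 + \tilde c(s,t) = 0$ of the plane-curve germ $(C, p_0)$, giving the analytic-type match. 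Conversely, over a smooth point of $C$ the residual conic degenerates transversally and the total space is smooth, so, granted that there are no rank-$1$ fibres, the singular points of $C$ are exactly the images of $\Sing(Y)$; this produces the bijection $\phi \colon \Sing(Y) \to \Sing(C)$ and finishes (2).

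The main obstacle is the uniform genericity step: showing that for every hyperplane section $Y = X\cap H$ of a general cubic fourfold $X$ a single line $\ell$ exists meeting all of the requirements at once --- disjoint from $\Sing(Y)$, inducing a conic bundle with no rank-$1$ fibre and with $C$ and the \'etale cover $D$ irreducible, and introducing no accidental singular fibres. This requires first bounding the singularities of the sections $Y$ for general $X$ (they are isolated and of controlled analytic type along the natural stratification of the dual variety $X^\vee$), and then a careful incidence-correspondence dimension count on the Fano variety of lines of $Y$, carried out case by case over that stratification. This is precisely the technical content of \cite{CML} and of its refinement to very good lines in \cite[Def. 2.9, Prop. 2.3]{LSV}; once a line of the required type is produced, the two preceding steps are essentially formal.
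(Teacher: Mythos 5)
The paper gives no proof of this proposition at all---it is imported verbatim from \cite[Prop.~2.3]{LSV} and \cite{CML}---and your proposal likewise defers the one genuinely hard step (the existence, for \emph{every} hyperplane section of a general $X$, of a single line that is simultaneously disjoint from $\Sing(Y)$, gives an \'etale cover with $C$ and $D$ irreducible, and introduces no extra degenerations) to exactly those references. The standard material you supply around that citation (the symmetric $3\times 3$ matrix of forms, the quintic discriminant, \'etaleness as absence of rank-$1$ fibres, and the rank-$2$ normal form $v_0v_1+\tilde c(s,t)$ exhibiting the double suspension) is correct, so your write-up is consistent with, and slightly more detailed than, what the paper does.
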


\begin{rem}\label{rem_versal}
A key fact about the hyperplane sections of a {\it general} cubic fourfold $X$ is that the linear system of hyperplane sections of $X$ gives a simultaneous versal deformation of the singularities of any hyperplane section $Y$ of $X$ (see \cite[Prop. 3.6]{LSV}; see also \cite[Sect. 3]{CGHL} for a related discussion). The same is  true for the associated curves $C$. More precisely, given $Y$ and a choice of very good line $\ell$, one gets a (possibly singular) quintic $C$. A small embedded deformation of $(Y,\ell)$ (in $X$) determines a family of quintics, which versally deform the singularities of $C$.  In particular, this bounds the Milnor number of the singularities of $C$, and by standard singularity theory, it follows that only the types 
$A_1,\dots, A_5, D_4, D_5$ can occur. Finally,  property (2) above says that $C$ has the same number and type of $A_i$ and $D_k$ singularities as $Y$. 
\end{rem}

Returning to the \cite{LSV} construction of $\overline \calJ/B$, we note that locally, in the \'etale or analytic topology, $\overline \calJ/B$ is the relative compactified Prym associated to a family of curves $(\calD,\calC)$ which is obtained via projection from a (local) family of good lines on the universal family of hyperplane sections $\calY/B$ of $X$. By \cite[Prop. 5.1 and Thm. 5.7 ]{LSV} the fiber of $\pi:\overline \calJ\to B$ over  $b \in B$ is the compactified Prym variety of a double cover $D \to C$ of irreducible locally planar curves, i.e.
\begin{equation}\label{defjb}
\overline \calJ_b=\overline \Prym(D,C)=:\bar P_{D/C}.
\end{equation}
(We note that since both $C$ and $D$ are irreducible with planar singularities, there is no ambiguity in the definition of the compactified Prym). We recall that the compactified Prym is a natural adaptation (in the double cover set-up) of the compactified Jacobian. We refer to  \cite[\S 4 ]{LSV}  and to Section \ref{section prym}  for the relevant notation, definitions and first properties of compactified Prym varieties. For the moment, we recall that the compactified Prym variety has an abelian variety factor, namely
the ``compact part'' of the generalized Prym variety of $D$ over $C$ (see Section \ref{section prym} for the relevant definitions). As discussed in Section \ref{section prym} below (based on ideas from \cite{Beau}), the relevant case for us is when  this abelian factor vanishes. Under the \'etale assumption, this is equivalent to saying that the genus of the normalization of $C$ is $1$, a case that is described geometrically below.

\begin{lemma} \label{only5A1}
Let $C$ be a plane quintic with a combination of $A_l$ and $D_k$ singularities, denoted $\sum_l m_l A_l+ \sum_k n_k D_k$. Let $g=g(C)$ be the geometric genus, and $\mu_{tot}(C)=\sum_l l\cdot m_l +\sum_k k\cdot n_k$ be the total Milnor number. Assume that $C$ is irreducible and $\mu_{tot}(C)\le 5$. Then $g \ge 1$, and $g=1$ iff $\sum_l m_l A_l+ \sum_k n_k D_k=5A_1$.
\end{lemma}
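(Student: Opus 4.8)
The plan is to reduce the entire statement to the classical genus--delta formula for plane curves, combined with the short list of admissible singularities recorded in Remark \ref{rem_versal}. Since $C$ is an irreducible plane quintic, its arithmetic genus is $p_a(C)=\binom{5-1}{2}=6$, and its geometric genus is
\[
  g \;=\; p_a(C)-\sum_{p\in\Sing(C)}\delta_p \;=\; 6-\sum_p \delta_p,
\]
where $\delta_p$ is the delta-invariant of the singularity at $p$. Thus the whole lemma becomes a comparison between $\sum_p \delta_p$ and the total Milnor number $\mu_{tot}(C)=\sum_p \mu_p$.

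The key input is the elementary inequality $\delta_p\le \mu_p$, valid for every reduced planar singularity, with equality \emph{iff} the singularity is an ordinary node $A_1$. First I would establish this via Milnor's formula $\mu_p=2\delta_p-r_p+1$, where $r_p$ is the number of local branches: this gives $\mu_p-\delta_p=\delta_p-(r_p-1)$, and since $\delta_p\ge\binom{r_p}{2}\ge r_p-1$ (each pair of branches contributes at least $1$ to $\delta_p$), we obtain $\mu_p\ge\delta_p$, with equality forcing $r_p=2$ and $\delta_p=1$, i.e.\ a node. Alternatively, because Remark \ref{rem_versal} restricts the possible types to $A_1,\dots,A_5,D_4,D_5$, one may simply read the pairs $(\delta_p,\mu_p)$ off the standard table --- $(1,1)$ for $A_1$, $(1,2)$ for $A_2$, $(2,3)$ for $A_3$, $(2,4)$ for $A_4$, $(3,5)$ for $A_5$, $(3,4)$ for $D_4$, $(3,5)$ for $D_5$ --- and observe directly that $\delta_p\le\mu_p$ with equality only for $A_1$.

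With this in hand the conclusions are immediate. Summing over $p$ gives $\sum_p\delta_p\le\sum_p\mu_p=\mu_{tot}(C)\le 5$, hence $g=6-\sum_p\delta_p\ge 1$, which is the first assertion. For the second, $g=1$ is equivalent to $\sum_p\delta_p=5$; combined with $\sum_p\delta_p\le\mu_{tot}(C)\le 5$ this forces $\sum_p\delta_p=\mu_{tot}(C)=5$, so $\delta_p=\mu_p$ at \emph{every} singular point. By the equality case above each singularity is then a node, and $\mu_{tot}(C)=5$ means there are exactly five of them, i.e.\ the configuration is $5A_1$. Conversely, an irreducible quintic with five nodes has $\mu_{tot}=5$ and $g=6-5=1$.

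I do not expect a serious obstacle: the only delicate point is the bookkeeping of delta- and Milnor-invariants for the $D_4,D_5$ types, where the number of branches differs from the $A$-chain, so I would verify these against Milnor's formula rather than trust memory. The irreducibility hypothesis is used only to guarantee that the normalization is connected, so that $g$ is genuinely the genus of a single smooth curve and the genus--delta formula applies in the clean form above.
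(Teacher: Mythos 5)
Your proof is correct and follows essentially the same route as the paper: compute $g = p_a(C) - \sum_p \delta_p = 6 - \sum_p \delta_p$, bound $\sum_p \delta_p$ by $\mu_{tot}(C) \le 5$ using $\delta_p \le \mu_p$ with equality exactly for nodes, and read off both conclusions. Your extra derivation of $\delta_p \le \mu_p$ from Milnor's formula $\mu_p = 2\delta_p - r_p + 1$ is a nice touch (the paper just tabulates $\delta$ for $A_1,\dots,A_5,D_4,D_5$), and your $(\delta,\mu)$ values for the $D_k$ types are right.
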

\begin{proof} The arithmetic genus of a quintic curve $C$ is $6$. Since $C$ is irreducible, each relevant singularity gives a genus drop of $\delta$ according to the following table 
\[
\begin{tabular}{ |c|c|c|c|c| } 
 \hline
Singularity & $A_{2l-1}$ & $A_{2l}$ & $D_4$ & $D_5$ \\
 \hline
$\delta$ & l & l &  3 & 3\\
 \hline
 \end{tabular}
\]
Assuming $C$ is as in the lemma, we get 
\[
p_a(C)-p_g(C)=\sum _l m_l \delta(A_l) + \sum_k n_k \delta(D_k) \le \sum _l m_l \cdot  l+ \sum_k n_k \cdot k \le 5.
\]
We thus obtain $p_g(C)\ge 1$, and the equality holds if and only if $C$ has $5A_1$ singularities. 
\end{proof}

\subsection{Stratification of the dual variety $X^\vee$} Motivated by Lemma \ref{only5A1}, we will need to count the hyperplane sections $Y$ of $X$ that lead to $5$ nodal plane quintics (via the projection from a very good line). In view of Proposition \ref{verygeneralline}, this is equivalent to counting the $5$-tangent hyperplanes to a general cubic fourfold.  This is part of a more general question regarding the structure of the dual variety $X^\vee$ that we briefly review below. 

Let $X\subset \bP^5$ be a general cubic fourfold. The dual variety $X^{\vee}$ is naturally stratified in terms of the singularities of the associated hyperplane section $Y=X\cap H_b$ (for $b\in X^\vee\subset B$). More precisely, $Y$ will have some combination of $AD$ singularities $R=\sum_l m_l A_l+ \sum_k n_k D_k$ with $m_l, n_k \ge 0$, i.e. $Y$ has exactly $m_l$ singular points of type $A_l$ and $n_k$ singular points of type $D_k$. Prescribing a combination of singularities $R$ will define a stratum $\Sigma_R$ of $X^\vee$. In our set-up (i.e. $X$ a general cubic), we know (cf. Rem. \ref{rem_versal}) that at worst $A_5$ and $D_5$ occur and furthermore the codimension of the stratum $\Sigma_R$ associated to $R=\sum_l m_l A_l+ \sum n_k D_k$ is
\begin{equation}\label{eq_mutot}
\mu_{tot}(R)=\sum_l l \cdot m_l+\sum_k k\cdot n_k\le 5.
\end{equation}
The versality property of Remark \ref{rem_versal} easily allows one to determine the incidence of various strata (e.g. $\Sigma_{D_4}\subset \overline \Sigma_{3A_1}$); we refer the interested reader to \cite{CML2} for further discussion of the local structure of the strata $\Sigma_R$, and their geometric relevance. What is relevant here is to note that each $\overline\Sigma_R$ is a projective variety in $(\bP^5)^\vee$ and thus has a degree $d_R$ and (expected) codimension $\mu_{tot}(R)$ (e.g. $d_{A_1}=\deg(X^\vee)=3\cdot 2^4$). The computation of the degree $d_R$ is a classical question in enumerative geometry and singularity theory. The theory of Thom polynomials (Rim\'{a}nyi \cite{Rim}, Kazarian \cite{Kazarian}) gives an effective method of computing the various $d_R$ as long as the simultaneous versal property (cf. Rem. \ref{rem_versal}) holds (in particular, the expected codimension $\mu_{tot}(R)$ is the actual codimension). For the low dimensional cases and small $\mu_{tot}(R)$, Kazarian \cite{Kazarian} gave explicit formulae. In particular, all that is needed for our purposes is the degree $\deg(\overline \Sigma_{5A_1})$, or equivalently the number of $5$-tangent hyperplanes to a general cubic fourfold $X$. 

\begin{thm}[{Kazarian \cite[Sect. 10]{Kazarian}}]\label{theoremK}
Let $X$ be a general cubic fourfold. Then there are exactly $176,904$ hyperplanes $H$ which are $5$ tangent to $X$. 
\end{thm}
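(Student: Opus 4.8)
The plan is to reinterpret the count of $5$-tangent hyperplanes as the degree of a zero-dimensional multisingularity locus and to evaluate it by the theory of Thom polynomials. First I would reformulate the problem through the Gauss map. A smooth point $x\in X$ lies in the singular locus of $X\cap H$ exactly when $H$ is tangent to $X$ at $x$; since $X$ is a fourfold in $\P^5$ its embedded tangent hyperplane $\mathbb{T}_xX$ is the unique hyperplane with this property, cut out by the gradient, so the Gauss map $\gamma\colon X\to(\P^5)^\vee$, $\gamma(x)=[\partial_0F(x):\cdots:\partial_5F(x)]$, is defined by the six quadrics $\partial_iF$. Under this dictionary a hyperplane $H$ that is $5$-tangent to $X$ corresponds to five distinct points $x_1,\dots,x_5\in X$ with $\gamma(x_1)=\cdots=\gamma(x_5)=H$, equivalently to a cubic threefold $X\cap H$ with exactly five nodes. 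Thus I am computing the degree of the zero-dimensional stratum $\overline\Sigma_{5A_1}\subset(\P^5)^\vee$, or what is the same, the number of $5$-nodal members of the full linear system $|\mc{O}_X(1)|$.

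Second, before any computation I would settle the genericity that makes the count enumeratively meaningful. By Remark~\ref{rem_versal} the linear system of hyperplane sections versally deforms the singularities of every $Y=X\cap H$, so for $X$ general each $\Sigma_R$ has the expected codimension $\mu_{tot}(R)$; in particular $\Sigma_{5A_1}$ is already closed and is a finite, reduced set of points, since a boundary point would force two of the nodes to collide, raising $\mu_{tot}$ to at least $6$, which cannot occur in a $5$-dimensional system. The same versality makes each of the five nodes an honest transverse $A_1$ with five distinct tangency points, so every point of $\Sigma_{5A_1}$ contributes with multiplicity one and the sought number is exactly $\deg\overline\Sigma_{5A_1}$. (This is also what is needed in Corollary~\ref{chione}, via Lemma~\ref{only5A1}, to attach Euler characteristic $1$ to each such fiber.)

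Third, I would express $\deg\overline\Sigma_{5A_1}$ as a universal characteristic number. By the theory of Thom polynomials for multisingularities (Kazarian~\cite{Kazarian}; in the equivalent guise of node-counting polynomials for members of a linear system, this is the format of Vainsencher and of Kleiman--Piene), the class $[\,5A_1\,]$ is a universal polynomial in the Chern classes $c_i(TX)$ and $h:=c_1(\mc{O}_X(1))$. For the cubic fourfold these are fully explicit: the normal bundle sequence of $X\subset\P^5$ gives $c(TX)=(1+h)^6/(1+3h)$, and $\int_X h^4=\deg X=3$, so after substitution every monomial collapses to a rational multiple of $\int_X h^4$ and the polynomial returns a single integer, which Kazarian~\cite[Sect.~10]{Kazarian} identifies with $176{,}904$.

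The main obstacle is precisely this last step: assembling and evaluating the $5A_1$ Thom polynomial. Because the five nodes are required to lie over the \emph{same} point of the target, the class is \emph{not} the naive symmetric power $\tfrac{1}{5!}[A_1]^5$ of the single-node class; it carries genuine residual corrections recording the coincidence of the five tangency points, and computing these residual polynomials is the technical heart of the multisingularity calculus. I would therefore not attempt to rederive them from scratch, but rather invoke Kazarian's explicit degree-$5$ formulae and verify only the specialization to the cubic-fourfold Chern data above, the remaining arithmetic being the routine reduction to powers of $h$ with $\int_X h^4=3$ that yields the stated value $176{,}904$.
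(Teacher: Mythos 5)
Your proposal is correct and follows essentially the same route as the paper: both reduce the count to the degree of the zero-dimensional stratum $\overline\Sigma_{5A_1}\subset(\P^5)^\vee$, use the versality of the linear system of hyperplane sections (Remark \ref{rem_versal}) to guarantee that this stratum is finite and reduced so that each point counts with multiplicity one, and then invoke Kazarian's explicit $5A_1$ multisingularity formula rather than rederiving it. The only cosmetic difference is that the paper plugs $d=3$ into Kazarian's already-specialized polynomial for degree-$d$ hypersurfaces in $\P^5$, whereas you would substitute the Chern data $c(TX)=(1+h)^6/(1+3h)$, $\int_X h^4=3$ into the universal Thom polynomial directly.
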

\begin{proof}
The specific formula relevant to us is listed in \cite[p. 6--7]{K2} (see ``enum[4,5]'' in loc. cit.). For the reader's convenience, we reproduce the formula for the number of $5$-tangent hyperplanes to a general degree $d\ge 3$ hypersuface in $\bP^5$:
\begin{eqnarray*}
\scriptstyle
m_{5A_1}(d)&\scriptstyle=&\scriptstyle\frac{1}{120}(d-2)d(d^{23}-18 d^{22}+154 d^{21}-832 d^{20}+3,181 d^{19}-9,332 d^{18}+23,306 d^{17}-56,258 d^{16}+137,704 d^{15}-315,702 d^{14}\\&&\scriptstyle +632,037 d^{13}-1,167,746 d^{12}+2,276,543 d^{11}- 4,606,484 d^{10}+8,183,892 d^9-12,182,630 d^8+19,262,625 d^7\\
&&\scriptstyle- 37,322,080 d^6+63,347,155 d^5-72,821,310 d^4+73,475,394 d^3-156,527,928 d^2+284,455,368d-193,415,040)
\end{eqnarray*}
Setting $d=3$, we get $m_{5A_1}(3)=176,904$ as claimed.
\end{proof}

\begin{rem}
For comparison, we recall the situation for lower dimensional cubics. It is a standard fact  that a cubic surface has $45$ tritangent hyperplanes. For a general cubic threefold $Y$,  there are $495(=2^4(2^5-1)-1)$ $4$-tangent hyperplanes to $Y$. This can be obtained as a special case of Kazarian's results, or alternatively (and more geometrically), as the number of non-trivial odd theta characteristic for the intermediate Jacobian $J(Y)$. The latter claim follows by using the Prym description $J(Y)=\Prym(D,C)$ as above, and relating the $4$-tangent hyperplanes to $Y$ to a certain configuration of conics relative to the quintic $C$ (which was studied in  \cite{White}). We note that $176,904=3^5(3^6-1)$, which indicates a relationship to the group of $3$-torsion points on an abelian variety, but we are not aware of a direct geometric link. 
\end{rem}

\section{The Euler characteristic of compactified Prym varieties} \label{section prym}
The \cite{LSV} model $Z=\overline \calJ/B$ of OG10 HK manifolds can be understood by means of relative compactified Prym varieties associated to double covers of plane quintics. Here, after a brief review of the compactified Prym varieties, we discuss the  Euler characteristic of compactified Pryms. The main results (Prop. \ref{chizero} and Cor. \ref{chione}) are analogous to results of Beauville \cite{Beau} for Jacobians. 

\subsection{Compactified Prym varieties}
Let $f: D \to C$ be an \'etale double cover of irreducible locally planar curves and let $\iota: D \to D$ be the fixed point free involution associated to the covering. We denote by $J_D^d$ the degree $d$ generalized Jacobian of $D$ and by $\bar J^d_D$  its degree $d$ compactified Jacobian, parameterizing locally free and torsion free sheaves of rank $1$ and degree $d$, respectively. We recall that since $D$ is irreducible with locally planar singularities, $\bar J^d_D$ is irreducible, and its smooth locus is precisely $J_D^d$ (e.g. \cite{Rego}). We denote by $J_D$ and $\bar J_D$ the degree $0$ generalized and compactified Jacobians. Notice, however, that because $D$ is irreducible, $\bar J^d_D$  is independent of the degree.

In \cite[\S 4]{LSV} (cf. also \cite[\S 3]{ASF}), the compactified Prym variety is defined  as the identity component of the fixed locus of the involution 
\[
\begin{aligned}
-\iota^*: \bar J_D &\longrightarrow \bar J_D\\
F & \longmapsto (\iota^* F)^\vee:=\Shhom_{{\mc O_D}}(\iota^* F, \mc O_D)
\end{aligned}
\]
acting on the degree zero compactified Jacobian of $D$. In formulae:
\[
\bar P_{D/C}:=\Fix^\circ (-\iota^*) \subset \bar J_D.
\]
We refer the reader to \cite[\S 4]{LSV} for more details on this construction.
Let $g=p_a(C)$ be the arithmetic genus of $C$. By \cite[Prop. 4.10 and Cor. 4.16]{LSV}, the compactified Prym variety $\bar P_{D/C}$ is an irreducible projective variety of dimension $g-1$. The open dense subset $P_{D/C}:=\bar P_{D/C} \cap J_D$ parameterizing line bundles, also called {\it the generalized Prym variety}, can be described in the following way.
 Let $\wt D$ and $\wt C$ be the normalization of the curves $D$ and $C$ respectively, and denote by $\wt g$ the genus of $\wt C$ so that $\wt g= g-\delta$  for some $\delta \ge 0$. There is a natural \'etale double cover $\wt D \to \wt C$ and $P_{D/C}$ fits in the exact sequence of algebraic groups
 \be \label{G}
0 \to G \to P_{D/C} \to P_{\wt D /\wt C} \to 0,
 \ee
where $G$ is an affine group of dimension $\delta$ (a product of additive and multiplicative groups) and the Prym variety $P_{\wt D /\wt C} $ is a principally polarized abelian variety of dimension $\wt g-1$. More precisely, $G$ is isomorphic to $\mc O^*_{\wt C} \slash \mc O^*_C=\oplus _{x \in \operatorname{Sing}(C)}(\mc O^*_{\wt C} \slash \mc O^*_C)_x$ and can be viewed inside $\mc O^*_{\wt D} \slash \mc O^*_D\cong G \times G$ with the anti--diagonal embedding. 

It is well known that $J_D$ acts on $\bar J_D$ by tensorization. It is shown in \cite[Lem 2.1]{Beau} that the stabilizer of every point can be described in the following way. First recall that if $F$ is a rank one torsion free sheaf, then there is a partial normalization $n': D' \to D$ and a torsion free sheaf $F'$ on $D'$, such that $\Shend_{{\mc O_D}'}(F')\cong \mc O_{D'}$ and $F=n'_* F'$. The curve $D'$ is uniquely determined by the condition $D'=\Spec_{\mc O_{D}} \Shend_{\mc O_D} (F)$. 
Moreover, if $\deg F=0$ and $\delta'=p_a(D)-p_a(D')$, then $\deg F'=-\delta'$. Finally, by \cite[Lem 3.1]{Beau}  the morphism
\[
n_*': \bar J_{D'}^{-\delta'} \to \bar J_D
\]
is an embedding. By \cite[Lem. 2.1]{Beau} the stabilizer of $F$ in $J_D$ is precisely the kernel of the pullback $({n'})^*: J_D \to J_{D'}$. By restriction, $P_{D/C}$ acts on $\bar J_{D}$ preserving $\Fix(-\iota^*)$, so there is an action of $P_{D/C}$ on $\bar P_{D/C}$. Let $F$ be a point in $\bar P_{D/C}$. An isomorphism $F \cong (\iota^* F)^\vee$, determines isomorphisms $\Shend_{\mc O_{D,x}} F_x \cong \Shend_{\mc O_{D,\iota(x)}} F_{\iota(x)}$ for every $x \in D$  inducing an involution $\iota': D' \to D'$ naturally lifting $\iota$. The curve $C'=D' \slash \iota'$ is  a partial normalization of $C$ and there is a corresponding pullback map between  Prym varieties
\[
({n'})^*: P_{D/C} \to P_{D' \slash C'}.
\]
The kernel of this morphism is naturally identified with $\mc O_{ C'}^* \slash \mc O_{C}^*$ and is precisely the stabilizer in $P_{D \slash C}$ of $F$. 

\subsection{The Euler characteristic of compactified Pryms} This proof of the following is an adaptation to compactified Pryms of the analogous statement by Beauville \cite[Prop. 2.2]{Beau}.

\begin{prop} \label{chizero}
Let $f: D \to C$  and $\wt g$ be as above. If $\wt g \ge 2$ then $\chi(\bar P_{D \slash C})=0$.
\end{prop}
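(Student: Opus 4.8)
The plan is to follow Beauville's strategy for Jacobians \cite[Prop.~2.2]{Beau}, replacing the compactified Jacobian by the compactified Prym and exploiting the action of the generalized Prym $P_{D/C}$ on $\bar P_{D/C}$ recalled above. The two structural inputs are additivity of $\chi$ over locally closed stratifications and its multiplicativity for fibrations that are torsors under a connected algebraic group (being principal bundles for a complex Lie group, such fibrations are locally trivial in the analytic topology, so Euler characteristics multiply). The guiding idea is that every stratum will carry one and the same positive-dimensional abelian variety factor, namely $P_{\wt D/\wt C}$, whose vanishing Euler characteristic forces $\chi(\bar P_{D/C})=0$.

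First I would stratify $\bar P_{D/C}$ by the endomorphism sheaf of the parametrized torsion free sheaf. Given $F\in\bar P_{D/C}$, set $D'=\Spec_{\mc O_D}\Shend_{\mc O_D}(F)$, a partial normalization of $D$; as recalled above, an isomorphism $F\cong(\iota^*F)^\vee$ induces a lift $\iota':D'\to D'$ of $\iota$ with quotient $C'$, itself a partial normalization of $C$. Collecting the sheaves with a fixed $\Shend_{\mc O_D}(F)\cong\mc O_{D'}$ defines a locally closed stratum $S_{D'}\subset\bar P_{D/C}$, and there are only finitely many such strata since $D$ admits only finitely many partial normalizations. Using the embedding $n'_*:\bar J_{D'}^{-\delta'}\into\bar J_D$ of \cite[Lem.~3.1]{Beau} together with its compatibility with $-\iota^*$, I would identify $S_{D'}$ with the locus of line bundles inside $\bar P_{D'/C'}$, i.e.\ with the generalized Prym $P_{D'/C'}$. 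Additivity of $\chi$ then gives
\[
\chi(\bar P_{D/C})=\sum_{D'}\chi(P_{D'/C'}),
\]
the sum running over the finitely many involution-compatible partial normalizations $D'\to C'$.

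It then remains to show that each $\chi(P_{D'/C'})=0$. The crucial observation is that normalizing $D'$ (resp.\ $C'$) recovers $\wt D$ (resp.\ $\wt C$), so the exact sequence \eqref{G} applied to $D'/C'$ reads
\[
0\to G'\to P_{D'/C'}\to P_{\wt D/\wt C}\to 0,
\]
with the \emph{same} abelian variety $A:=P_{\wt D/\wt C}$, of dimension $\wt g-1$, and with $G'$ a connected affine group (a successive extension of copies of $\mathbb{G}_a$ and $\mathbb{G}_m$). The surjection $P_{D'/C'}\to A$ is a torsor under the connected group $G'$, hence a locally trivial fibration, so $\chi(P_{D'/C'})=\chi(G')\,\chi(A)$. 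Since $\wt g\ge 2$, the abelian variety $A$ has positive dimension and therefore $\chi(A)=0$; consequently every term, and with it the whole sum, vanishes.

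I expect the main obstacle to be the bookkeeping of the second paragraph: checking that the $S_{D'}$ are genuinely locally closed and that the embedding $n'_*$ is compatible with $-\iota^*$, so that $S_{D'}$ is exactly the line-bundle locus of $\bar P_{D'/C'}$. Note, however, that even if this identification only realizes $S_{D'}$ as a (locally trivial) torsor under $P_{D'/C'}$ rather than $P_{D'/C'}$ itself, the Euler characteristic is unchanged, so the conclusion is unaffected. Once the stratification is in place the argument is purely formal: everything reduces to the single fact that $\wt g\ge 2$ makes the common abelian factor $A=P_{\wt D/\wt C}$ positive-dimensional, and a positive-dimensional abelian variety has zero Euler characteristic.
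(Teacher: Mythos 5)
Your conclusion is right and your argument is in the spirit of Beauville's, but you take a genuinely different route from the paper. The paper's proof uses no stratification at all: it observes that since $G$ in the sequence \eqref{G} is a divisible, hence injective, abelian group, that sequence splits as a sequence of abelian groups, so for every $n\ge 2$ one can lift a subgroup $K_n$ of order $n$ of the positive-dimensional abelian variety $P_{\wt D/\wt C}$ (such a subgroup exists exactly because $\wt g\ge 2$) to a subgroup of $P_{D/C}$ meeting the kernel of $P_{D/C}\to P_{\wt D/\wt C}$ trivially. Since every stabilizer for the action of $P_{D/C}$ on $\bar P_{D/C}$ is contained in that kernel, $K_n$ acts freely, hence $n\mid\chi(\bar P_{D/C})$ for all $n$ and $\chi(\bar P_{D/C})=0$. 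This is shorter than your argument and, more importantly, uses only the single fact about stabilizers recalled before the proposition, not a global description of the orbit structure of $\bar P_{D/C}$.

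That matters because the one soft spot in your write-up is precisely the stratification step. The identification of the stratum $S_{D'}=\{F:\Shend_{\mc O_D}(F)\cong\mc O_{D'}\}$ with a single generalized Prym $P_{D'/C'}$ is immediate for nodal curves, but Proposition \ref{chizero} must be applied to curves with $A_1,\dots,A_5,D_4,D_5$ singularities (cf.\ Remark \ref{rem_versal}); the nodal hypothesis $(*)$ is imposed only \emph{after} this proposition. For non-nodal planar singularities a stratum can consist of several $P_{D/C}$-orbits, or even of a positive-dimensional family of orbits, so the formula $\chi(\bar P_{D/C})=\sum_{D'}\chi(P_{D'/C'})$ with finitely many terms is not automatic. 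Your argument does survive: every orbit is a torsor under some $P_{D'/C'}$, which surjects onto the fixed abelian variety $P_{\wt D/\wt C}$ of dimension $\wt g-1\ge 1$ with connected affine kernel and therefore has vanishing Euler characteristic, and one can refine the orbit decomposition to a finite constructible partition whose pieces fiber in such orbits, giving $\chi(\bar P_{D/C})=0$. But you would need to carry out that refinement explicitly --- or simply switch to the divisibility argument, which sidesteps the issue entirely.
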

\begin{proof} 
It is enough to show that for any  integer $n\ge 2$ there is a free action  of a group of order $n$ on $\bar P_{D \slash C}$. Indeed, this implies that $\chi(\bar P_{D \slash C})$ is divisible by $n$ for every integer $n \ge 2$, and thus $\chi(\bar P_{D \slash C})=0$. Consider the sequence (\ref{G}). Since $G$ is a divisible (hence injective) abelian group, this sequence is split (as a sequence of abelian groups). It follows that as long as $ P_{\wt D /\wt C}$ is an abelian variety of dimension $\ge 1$ (i.e. as long as $\wt g \ge 2$), we can find a group $K_n$ of order $n$ in $P_{D\slash C}$ which maps injectively to $ P_{\wt D /\wt C}$.  Since by the above discussion the stabilizer of any point of $\bar P_{D\slash C}$ is contained in the kernel of $P_{D/C} \to P_{\wt D \slash \wt C}$, it follows that $K_n$ acts freely on $\bar P_{D\slash C}$. This completes the proof.
\end{proof}

We are now left with computing the Euler characteristic of the compactified Prym variety of an \'etale double cover of irreducible curves of geometric genus $1$. In view of Lemma \ref{only5A1} we only need to focus on nodal curves, so for the rest of this section we make the following assumption
\[
(*) \,\, C \text{ is a nodal curve.}
\]

We recalled earlier that for every partial normalization $n': D' \to D$ there is a natural closed embedding
\[
 n_*'(\bar J_{D'}^{-\delta'}) \subset \bar J_D.
\]
Notice the shift  by $-\delta'=-(p_a(D)-p_a(D'))$ in the degree. We wish to describe the intersection of $\bar P_{D \slash C}$ with each $ n_*'(\bar J_{D'}^{-\delta'}) $. We will do so expressing this intersection in terms of a ``twisted'' Prym. First, let us recall a few facts about relative duality applied to the finite morphism $n': D' \to D$.  Since $n'$ is a finite morphism, it admits a relative dualizing sheaf which we denote by $\omega_{n'}$.  By relative duality
\[
\Shhom_{\mc O_D}(n'_*F', \mc O_D)=n'_* \Shhom_{{\mc O_D}'}(F', \omega_{n'}).
\]
Since $D$ and $D'$ are nodal curves, their dualizing sheaves are locally free and   $\omega_{D'}=\omega_{n'} \otimes ({n'})^* \omega_D$ is a line bundle on $D'$ of degree $\deg \omega_{n'}=-2\delta'$.
There is a commutative diagram
\[
\xymatrix{
\bar J_{D'}^{-\delta'} \ar[d]_{n'_*} \ar[r] & \bar J_{D'}^{-\delta'} \ar[d]_{n'_*} \\
\bar J_D \ar[r] & \bar J_{D}
} \quad 
\xymatrix{
F' \ar@{|->}[d] \ar@{|->}[r] & \Shhom_{{\mc O_D}'}(F', \omega_{n'}) \ar@{|->}[d] \\
n'_* F'  \ar@{|->}[r] & n'_* \Shhom_{{\mc O_D}'}(F', \omega_{n'}).
} 
\]

\begin{prop} Let $D \to C$ be an \'etale double cover of nodal and irreducible curves and let $D' \to D$ be a partial normalization of $D$. Then
$\bar P_{D \slash C} \cap  n_*'(\bar J_{D'}^{-\delta'}) \neq \emptyset$ if and only if there is an involution  $\iota': D' \to D'$   lifting $\iota$. If this is the case, then
\[
\bar P_{D \slash C} \cap  n_*'(\bar J_{D'}^{-\delta'}) \cong \bar P_{D' \slash C'}
\]
where as above $C'=D' \slash \iota'$ is a partial normalization of $C$.
\end{prop}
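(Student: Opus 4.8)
The plan is to analyze the involution $-\iota^*$ directly on the stratum $n'_*(\bar J_{D'}^{-\delta'})$, combining the relative duality formula for $n'$ with base change along the automorphism $\iota$. First I would record how $-\iota^*$ moves a point $n'_* F'$ of the stratum. Since $\iota$ is an isomorphism, base change gives $\iota^*(n'_* F') = (\iota \circ n')_* F'$, and relative duality for the finite map $\iota \circ n'$ — whose relative dualizing sheaf satisfies $\omega_{\iota \circ n'} \cong \omega_{n'}$, because $\iota$ is an automorphism so $\omega_\iota \cong \mc O_D$ — yields
\[
-\iota^*(n'_* F') \;=\; (\iota\circ n')_*\,\Shhom_{\mc O_{D'}}(F',\omega_{n'}).
\]
Thus $-\iota^*$ carries the stratum $n'_*(\bar J_{D'}^{-\delta'})$, whose points have endomorphism curve $D' = \Spec_{\mc O_D}\Shend_{\mc O_D}(n'_* F')$, onto the analogous stratum attached to the conjugate partial normalization $\iota\circ n'\colon D' \to D$.

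For the nonemptiness criterion I would argue as follows. If $F = n'_* F'$ lies in $\bar P_{D/C}\cap n'_*(\bar J_{D'}^{-\delta'})$, then $-\iota^* F \cong F$; applying $\Shend_{\mc O_D}(-)$ and using $\Shend(G^\vee)\cong\Shend(G)$ together with $\Shend(\iota^* G) = \iota^*\Shend(G)$ gives $n'_*\mc O_{D'} \cong (\iota\circ n')_*\mc O_{D'}$ as $\mc O_D$-subalgebras of the normalization of $D$. By the $\Spec_{\mc O_D}$ correspondence between partial normalizations and such subalgebras, this says exactly that $n'$ and $\iota\circ n'$ are the same partial normalization, i.e. there is $\iota'\colon D'\to D'$ with $n'\circ\iota' = \iota\circ n'$; since $n'$ is birational, $\iota'$ is forced to be an involution, and it is fixed-point free because $\iota$ is, so $C' := D'/\iota'$ is a partial normalization of $C$ with $D'\to C'$ \'etale. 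Conversely, given such a lift $\iota'$, I would produce an explicit point of the intersection (see the next paragraph), so the intersection is nonempty precisely when $\iota$ lifts.

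For the identification with $\bar P_{D'/C'}$, assume $\iota'$ exists. Substituting $\iota\circ n' = n'\circ\iota'$ into the displayed formula and using that $n'_*$ is a closed embedding, the condition $-\iota^* F \cong F$ becomes, on $D'$, the $\omega_{n'}$-twisted anti-invariance
\[
F' \;\cong\; \Shhom_{\mc O_{D'}}\big((\iota')^* F',\,\omega_{n'}\big),
\]
where I use $(\iota')^*\omega_{n'}\cong\omega_{n'}$ (from $(\iota')^*\omega_{D'}\cong\omega_{D'}$ and $\iota^*\omega_D\cong\omega_D$) and that $\iota'$ is an involution. So the intersection is the fixed locus in $\bar J_{D'}^{-\delta'}$ of the twisted involution $\sigma(F') = ((\iota')^* F')^\vee\otimes\omega_{n'}$. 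To compare $\Fix(\sigma)$ with the genuine compactified Prym $\bar P_{D'/C'} = \Fix^\circ(-(\iota')^*)\subset\bar J_{D'}^0$, I would exhibit a line bundle $L$ with $L\otimes(\iota')^* L\cong\omega_{n'}$: taking, for each $\iota$-orbit of normalized nodes of $D$, the two $D'$-preimages of one representative produces an explicit such $L$ of degree $-\delta'$ lying in $\Fix(\sigma)$. Tensoring by $L^{-1}$ then maps $\bar J_{D'}^{-\delta'}$ isomorphically to $\bar J_{D'}^0$ and intertwines $\sigma$ with $-(\iota')^*$, hence carries $\Fix(\sigma)$ isomorphically onto $\Fix(-(\iota')^*)$ and matches identity components, giving $\bar P_{D/C}\cap n'_*(\bar J_{D'}^{-\delta'})\cong\bar P_{D'/C'}$.

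I expect the main obstacle to be the bookkeeping in the first paragraph — correctly combining relative duality with base change through $\iota$ and verifying $\omega_{\iota\circ n'}\cong\omega_{n'}$ and $(\iota')^*\omega_{n'}\cong\omega_{n'}$ — and, in the last step, checking that the twisted fixed locus actually contains a line bundle so that the translation isomorphism is available; the explicit $L$ above resolves the latter and simultaneously supplies the point needed for the ``if'' direction of the nonemptiness criterion.
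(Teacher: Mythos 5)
Your overall route is the same as the paper's: compute $-\iota^*$ on the stratum via relative duality, reduce to the fixed locus of the twisted involution $\tau(F')=\Shhom_{\mc O_{D'}}\left(({\iota'})^*F',\omega_{n'}\right)$, and untwist by a line bundle $L$ with $L\otimes(\iota')^*L\cong\omega_{n'}$ (your explicit $L$ supported on the $D'$-preimages of orbit representatives of the normalized nodes is a nice concrete substitute for the paper's appeal to Mumford). The characterization of when the lift $\iota'$ exists, via $\Shend_{\mc O_D}(n'_*F')\cong n'_*\mc O_{D'}$, also matches the paper's discussion.

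The gap is at the very end, in the phrase ``matches identity components''. What your argument actually establishes is an isomorphism $\Fix(-\iota^*)\cap n'_*(\bar J_{D'}^{-\delta'})\cong\Fix(-(\iota')^*)$. But $\bar P_{D\slash C}$ is only one of the \emph{four} connected components of $\Fix(-\iota^*)$, and likewise $\bar P_{D'\slash C'}$ is one of four components of $\Fix(-(\iota')^*)$. So you still owe two things: (a) that $\bar P_{D\slash C}$ meets the stratum at all --- your explicit point $n'_*L$ lies in $\Fix(-\iota^*)$, but nothing you say places it in the identity component rather than in one of the other three; and (b) that $\bar P_{D\slash C}$ meets the stratum in exactly one component of $\Fix(\tau)$, rather than in two or more, which would make the intersection disconnected and certainly not isomorphic to $\bar P_{D'\slash C'}$. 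The paper closes exactly this point by invoking Cor.~4.16 of the LSV paper: all four components of $\Fix(-\iota^*)$ are isomorphic to one another via tensorization by line bundles lying in those components, these isomorphisms preserve the local type of the sheaves and hence the stratification by partial normalizations, so if one component meets the stratum they all do; and since $\Fix(\tau)$ has exactly four components distributed among the four components of $\Fix(-\iota^*)$, each component --- in particular $\bar P_{D\slash C}$ --- meets the stratum in exactly one of them. You need this component count (or an equivalent argument) to finish both the ``if'' direction of the nonemptiness claim and the final isomorphism.
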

\begin{proof} We  show that $\bar P_{D \slash C} \cap  n_*'(\bar J_{D'}^{-\delta'})$ is the image under $n'_*$ of a ``twisted'' Prym variety sitting in $\bar J_{D'}^{-\delta'}$. Let $F$ be a point in $\Fix(-\iota^*) \cap  n_*'(\bar J_{D'}^{-\delta'})$ and let $F'$ be such that $F=n_*' F'$. As observed above, this ensures that there is an involution $\iota': D' \to D'$, which lifts $\iota$. By uniqueness of the relative dualizing sheaf we see that $({\iota'})^* \omega_{n'} \cong \omega_{n'}$ and hence
\[
(\iota^* F)^\vee= \iota^* \Shhom_{\mc O_D}(n'_* F, \mc O_D)\cong n'_* \Shhom_{{\mc O_D}'}\left(({\iota'})^*F', \omega_{n'}\right).
\]
Here, we have used: that $\iota^*$ and $( \cdot )^\vee$ commute, since they commute on the dense open subset parametrizing locally free sheaves (similarly, $(\iota')^*$ and $ \Shhom_{{\mc O_D}'}\left( \cdot, \omega_{n'}\right)$ commute); that for any sheaf $\mc E$ on $D$, $\iota^* \mc E \cong \iota_* \mc E$ (and similarly for $\iota'$); and duality for finite morphisms (cf. Prop. 4.25 and Lem. 4.26 of \cite{Liu}).
It follows that, if $F \cong (\iota^* F)^\vee$, then $F'$ is a fixed point of the involution
\[
\begin{aligned}
\tau_{D'}:\bar J_{D'}^{-\delta'} & \longrightarrow \bar J_{D'}^{-\delta'} \\
F' & \longmapsto \Shhom_{{\mc O_D}'}\left(({\iota'})^*F', \omega_{n'}\right).
\end{aligned}
\]
This implies that
\be \label{fixed locus}
\Fix(-\iota^*) \cap  n_*'(\bar J_{D'}^{-\delta'})=\Fix(\tau).
\ee

Since $(\iota')^* \omega_{n'} \cong \omega_{n'}$ it is not hard to see (e.g. \cite[p. 329]{MumPrym}) that there exists a degree $\delta'$ line bundle $L$ on $D'$ such that
\[
\omega_{n'} \cong L^\vee \otimes (\iota')^* L^\vee.
\]
This shows that under the isomorphism $\bar J_{D'}^{-\delta'} \to \bar J_{D'}$ defined by tensoring with $L$, we have an isomorphism
\[
\Fix\left(-(\iota')^*\right) \cong  \Fix(\tau).
\]
Now by \cite[Cor. 4.16]{LSV}  both $\Fix(-\iota^*)$ and $\Fix(-(\iota')^*) \cong \Fix(\tau)$ have exactly four irreducible connected components which are isomorphic to each other. The compactified Prym variety is the one containing the identity, and the isomorphism of any component $Z$ with the Prym is defined by tensorization with a line bundle belonging to $Z$  (cf. \cite[(4.8)]{LSV}). This shows that this isomorphism preserves the local type of sheaves and hence that every component has the same strata appearing. Now look at (\ref{fixed locus}). The right hand side has $4$ connected components and hence so has the left hand side. By the discussion above, if one component intersects $n_*'(\bar J_{D'}^{-\delta'})$ then so do all the others. In particular, each component of $\Fix(-\iota^*)$ intersects $n_*'(\bar J_{D'}^{-\delta'})$ in a connected closed subset which has to be isomorphic to $ \bar P_{D' \slash C'}$.
\end{proof}

\begin{rem}
Without assuming that $C$ (and $D$) are nodal, the same conclusion holds true for any stratum corresponding to a partial normalization $D'$ that is also locally planar.  
\end{rem}

\begin{cor} \label{chione} If $C$ is a nodal curve of geometric genus $1$ then $\chi(\bar P_{D \slash C})=1$.
\end{cor}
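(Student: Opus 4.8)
The plan is to combine the stratification by partial normalization furnished by the preceding Proposition with the additivity of the Euler characteristic. First I would record the numerical input: by Lemma \ref{only5A1} an irreducible nodal plane quintic of geometric genus $1$ with $\mu_{tot}\le 5$ has exactly five nodes, so $p_a(C)=6$, the normalization $\wt C$ has genus $\wt g=1$, and $\bar P_{D/C}$ has dimension $5$. I would then stratify $\bar P_{D/C}$ by the endomorphism curve $D'=\Spec_{\mc O_D}\Shend_{\mc O_D}(F)$ of a rank-one torsion-free sheaf $F$: the locus where this curve is a fixed partial normalization $n'\colon D'\to D$ is exactly the line-bundle locus $P_{D'/C'}\subset\bar P_{D'/C'}$ identified in the preceding Proposition (and is empty unless $\iota$ lifts to an involution $\iota'$ of $D'$, in which case $C'=D'/\iota'$). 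These locally closed strata partition $\bar P_{D/C}$, so by additivity
\[
\chi(\bar P_{D/C})=\sum_{D'}\chi(P_{D'/C'}),
\]
the sum ranging over the partial normalizations to which $\iota$ lifts.

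Next I would evaluate each term. Since $f\colon D\to C$ is \'etale, $\Sing(D)=f^{-1}(\Sing(C))$ consists of two nodes over each node of $C$, interchanged by $\iota$; hence the partial normalizations admitting an $\iota$-lift are indexed by subsets $S\subseteq\Sing(C)$, with $C'$ the curve obtained from $C$ by resolving the nodes in $S$. In particular $\wt{C'}=\wt C$ always has genus $\wt g=1$, so in the exact sequence (\ref{G}) for $P_{D'/C'}$ the abelian part $P_{\wt{D'}/\wt{C'}}$ has dimension $\wt g-1=0$ and is trivial. Thus $P_{D'/C'}$ coincides with its affine part, and because $C'$ is nodal this affine group is a torus $\mathbb{G}_m^{\delta'}$ with $\delta'=|\Sing(C')|=5-|S|$ (each node contributes a multiplicative, not additive, factor). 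Consequently $\chi(P_{D'/C'})=\chi(\mathbb{G}_m^{\delta'})=0$ whenever $\delta'\ge 1$, whereas the unique fully resolved stratum $S=\Sing(C)$, i.e.\ $D'=\wt D$ and $C'=\wt C$, contributes $\chi(P_{\wt D/\wt C})$, the Euler characteristic of a $0$-dimensional (hence one-point) abelian variety, namely $1$. Summing gives $\chi(\bar P_{D/C})=1$.

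The two points I expect to require the most care are: (i) checking that the strata $P_{D'/C'}$ are genuinely locally closed and exhaust $\bar P_{D/C}$, so that additivity applies — this rests on the uniqueness of the endomorphism curve of each $F$ together with the preceding Proposition — and (ii) the essential use of the nodal hypothesis $(*)$, which forces every affine factor to be a torus rather than to contain additive factors $\mathbb{G}_a$; if cusps were permitted, the additive factors (with $\chi=1$) would spoil the vanishing of the intermediate strata, which is precisely why Lemma \ref{only5A1} and assumption $(*)$ confine us to the $5A_1$ case. I find it cleanest to package the whole computation through the torus $T=P_{D/C}\cong\mathbb{G}_m^5$ acting on $\bar P_{D/C}$: since $\chi(X)=\chi(X^{T})$ for any algebraic torus action, and a sheaf $F$ is $T$-fixed exactly when its stabilizer $\ker\bigl(P_{D/C}\to P_{D'/C'}\bigr)$ equals all of $T$, that is, when $P_{D'/C'}$ is trivial, the fixed locus is the single point $P_{\wt D/\wt C}$, whence $\chi(\bar P_{D/C})=\chi(\mathrm{pt})=1$ at once.
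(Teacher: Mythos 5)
Your proof is correct and follows essentially the same route as the paper: stratify $\bar P_{D/C}$ by the partial normalizations of $D$ to which $\iota$ lifts (equivalently, by subsets of the five nodes of $C$), note that each stratum is a torus extension of $P_{\wt D/\wt C}=\{pt\}$ and hence has Euler characteristic $0$, except for the one-point stratum of the full normalization. Your closing repackaging via the fixed locus of the $P_{D/C}\cong(\mathbb C^*)^5$-action is a pleasant shortcut, but it rests on the same stabilizer computation and amounts to the same argument.
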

\begin{proof}
Under the assumption, $\bar J_D$ admits a stratification in generalized Jacobians of partial normalizations of $D$. The stratification is indexed by the subset of the set $A$ of nodes of $D$ in the following way. There is a natural action of $\iota$ on $A$, so we can talk of $\iota$--invariant subsets of $A$. For every subset $B \subset A$, the stratum corresponding to the normalization $D_B$ of $D$ at the nodes of $B$ is isomorphic to 
\[
0 \to  (\mathbb C^*)^{\# A \setminus B} \to J_D \to J_{\wt D} \to 0.
\]
By the proposition above, such a stratum intersects the compactified Prym variety if and only if $B$ 
is $\iota$--invariant. If this is the case, then the induced stratum on the Prym is given by
\[
0 \to  (\mathbb C^*)^{(\# A \setminus B)\slash \iota} \to P_{D\slash C} \to P_{\wt D\slash \wt C}=\{pt\} .
\] 
Every stratum has trivial Euler number, except for the one corresponding to $B=A$, which is just one point.
\end{proof}

\section{Completion of the proof of Theorem \ref{mainthm}} \label{sec:mainthm}
By the discussion of Section \ref{secprelim} and Proposition \ref{chizero}, the only contribution to the Euler characteristic is due to compactified Pryms $\bar P_{D/C}$, where $C$ is an irreducible plane quintic of geometric genus $1$. By Lemma  \ref{only5A1} and Proposition \ref{verygeneralline}, such curves arise via projections from a general line $\ell$ on a $5$-nodal hyperplane section $Y=X\cap H$. By Theorem \ref{theoremK}, there are $176,904$ such hyperplanes. Finally, by Corollary \ref{chione}, the contribution of each such hyperplane is $1$. By Corollary \ref{cor_euler}, we conclude $\chi(Z)=\chi(\overline \calJ)=176,904\cdot 1$.
\qed


\bibliography{eulerOG10}
\end{document}